\documentclass[11pt]{article}
\usepackage{amssymb}
\usepackage[all,arc]{xy}
\usepackage{mathrsfs}
\usepackage{extarrows}
\usepackage{xcolor}
\usepackage{amsmath}
\usepackage{amsfonts,epsfig}
\usepackage{amssymb,bm}
\usepackage{graphicx}
\usepackage[T1]{fontenc}
\usepackage{subfig}
\usepackage{pgf}
\usepackage{appendix}

\usepackage{diagbox}
\usepackage{tabularx}  
\usepackage{booktabs}
\usepackage{multirow}
\usepackage{array,makecell}

\usepackage{algorithm}
\usepackage{algpseudocode}
\usepackage{pifont}

\usepackage{tikz}
\usetikzlibrary{decorations.pathreplacing}

\numberwithin{equation}{section}

\usepackage{amsmath,amsthm,amsfonts,amssymb,amscd,color}
\usepackage{hyperref}
\hypersetup{
	colorlinks=true,
	linkcolor=blue,
	anchorcolor=blue,
	citecolor=blue}

\allowdisplaybreaks[4]

\newtheorem{theo}{Theorem}[section]
\newtheorem{lem}{Lemma}[section]
\newtheorem{cor}{Corollary}[section]

\newtheorem{remark}{Remark}[section]

\newtheorem{prob}{Problem}[section]

\newcommand{\F}{\mathbb{F}}
\newcommand{\0}{\mathbf{0}}
\newcommand{\one}{\mathbf{1}}
\newcommand{\vv}{\mathbf{v}}

\newcommand{\PG}{\operatorname{PG}}
\newcommand{\binomq}[2]{\genfrac{[}{]}{0pt}{}{#1}{#2}_q}

\newcommand{\Z}{\mathbb{Z}}

\newcommand{\co}{\mathrm{co}}

\usepackage{amsthm,amssymb}

\newcommand{\ex}{{\rm ex}}

\begin{document}

	\title{On the codegree Turán density of projective geometries
		 }
	\author{Xiaona Fang, Yaojun Chen\thanks{Corresponding author: yaojunc@nju.edu.cn}\\
		{\small School of Mathematics, Nanjing University,} {\small Nanjing, 210093, P.R. China}
}
	\date{}
	\maketitle
	
	\begin{abstract}	
		Let $F$ be a $k$-uniform hypergraph, abbreviated as $k$-graph. The codegree Tur\'an density $\gamma(F)$ is the supremum over all $\gamma \in [0,1)$ such that, for arbitrarily large $n$, there exists an $n$-vertex $F$-free $k$-graph $H$ whose every $(k-1)$-subset of vertices lies in at least $\gamma n$ edges. Let $\PG_m(q)$ be the projective geometry of dimension $m$ over finite field $\mathbb{F}_q$.
		In this paper, we prove that $\gamma(\PG_m(q)) \ge \frac{1}{p}> 0$ for all $m$ and $q$,  where $p$ is the smallest prime divisor of $q+1$. This resolves an open problem proposed by Keevash and Zhao (JCT-B, 2007). Moreover, we determine the exact codegree Tur\'an density of $\PG_4(q)$ when $q$ is an odd prime power.

		\vskip 2mm
		\noindent{\bf Keywords}: Codegree Tur\'an density, projective geometries, hypergraph.
		
	\end{abstract}
	
	\section{Introduction}\label{sec1}
A $k$-graph is a $k$-uniform hypergraph, that is, each edge consists of $k$ vertices. For a $k$-graph $F$,
the Tur\'an number  $\ex(n,F)$ is defined as the maximum number of edges an $n$-vertex $F$-free $k$-graph can have. Originating from the pioneering works of Mantel and Tur\'an in the early twentieth century, Tur\'an-type problems have long been central to extremal combinatorics. To capture the asymptotic growth of $\ex(n,F)$, one defines the \emph{Tur\'an density} of $F$ as
\[
\pi(F) := \lim_{n\to\infty} \frac{\ex(n,F)}{\binom{n}{k}}.
\]
While Tur\'an densities for ordinary graphs ($k=2$) are well understood, the problem becomes far more complex for hypergraphs with $k\ge 3$.
Despite decades of intensive research, it remains an open challenge for determining the Tur\'an densities of 
the two 3-graphs on four vertices with three and four edges 

A natural and widely studied variant is the \emph{codegree Tur\'an density}, introduced by Mubayi and Zhao~\cite{mubayi}. For a $k$-graph $H$ and a subset $S \subseteq V(H)$, let $d_H(S)$ denote the number of edges containing $S$. The \emph{minimum codegree} of $H$, denoted by $\delta_{\mathrm{co}}(H)$, is the minimum of $d_H(S)$ ranging over all $(k-1)$-subsets $S \subseteq V(H)$. The codegree Tur\'an number $\ex_{\mathrm{co}}(n,F)$ is then defined as the maximum value of $\delta_{\mathrm{co}}(H)$ among all $n$-vertex $F$-free $k$-graphs $H$. The corresponding \emph{codegree Tur\'an density} is given by the limit
\[
\gamma(F) := \lim_{n\to\infty} \frac{\ex_{\mathrm{co}}(n,F)}{n},
\]
which is guaranteed to exist~\cite{mubayi}. And it is not hard to see that $\gamma(F) \le \pi(F)$.
 
 The codegree Turán density is difficult to determine and remains known for only a few  $k$-graphs. Among these, projective geometries constitute a class of highly significant and widely studied hypergraphs.
 Let $\mathbb{F}_q$ denote the finite field with $q$ elements. 
 The projective geometry of dimension $m$ over $\mathbb{F}_q$, denoted by $\PG_m(q)$, is the following $(q+1)$-graph. Its vertex set consists of all one-dimensional subspaces of $\mathbb{F}_q^{m+1}$. Its edges are the two-dimensional subspaces of $\mathbb{F}_q^{m+1}$, in which for each two-dimensional subspace, the set of one-dimensional subspaces that it contains is an edge of the hypergraph $\PG_m(q)$.

 In \cite{mubayi1}, Mubayi showed that $\gamma(\PG_2(2)) = \frac{1}{2}$, where $\PG_2(2)$ is the Fano plane.
 Later, Keevash \cite{keevash1} established the exact codegree extremal number $\ex_{\co}(n, \PG_2(2))$, for which an alternative proof was subsequently provided by DeBiasio and Jiang \cite{debia}.
 In 2007,
 Keevash and Zhao \cite{keevash} studied the codegree Tur\'an density for other projective
geometries $\PG_m(q)$ and obtained the following result.

\begin{theo}(Keevash and Zhao \cite{keevash})\label{thm0}
	The codegree density of projective geometries satisfies
	$
	\gamma(\PG_m(q)) \le 1-\frac{1}{m}.
	$
	Equality holds whenever $m=2$ and $q$ is $2$ or odd, and whenever
	$m=3$ and $q$ is $2$ or $3$.
\end{theo}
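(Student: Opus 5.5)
This is a cited result of Keevash and Zhao, so the plan below reconstructs its two halves. The upper bound is a supersaturation argument. The lower bounds come from explicit colouring constructions.

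\textbf{Upper bound.} Fix $\varepsilon>0$ and let $H$ be a $(q+1)$-graph on $n$ vertices with $\delta_{\co}(H)\ge (1-\frac1m+\varepsilon)n$. We build a copy of $\PG_m(q)$ greedily along a flag of subspaces. Suppose we have already embedded $\PG_{j}(q)$ for some $j<m$, with vertex images $v_P$. Extending to $\PG_{j+1}(q)$ means choosing a new vertex $v_x$ for a new point $x$, together with images of the remaining points of the new hyperplane complement. Every new line meets the old $\PG_j(q)$ in exactly one point, so each new point $y$ lies on a line $\ell$ through $x$ and some old point. It suffices to show that many $(q-1)$-tuples can complete each required edge simultaneously.

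The cleanest route is to induct on $m$ with blow-ups and supersaturation. Since $\pi$-type supersaturation holds for codegree as well (Mubayi--Zhao), it is enough to find a copy of $\PG_m(q)$ in a blow-up. In a blow-up, repeated vertices are allowed in different parts. Each point of $\PG_m(q)$ must lie in the common neighbourhood of $m$ "independent" constraints. A counting step shows that $m$ codegree conditions, each of density greater than $1-\frac1m$, have a common neighbour set of positive density, since $m\cdot(\frac1m-\varepsilon)<1$. Concretely, we pick a basis $e_0,\dots,e_m$ and embed points in order of weight. Each new point is determined, via the lines it lies on, by at most $m$ previously embedded $q$-sets. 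The admissible choices form the intersection of at most $m$ neighbourhoods, each of size at least $(1-\frac1m+\varepsilon)n$. Their intersection therefore has at least $\varepsilon m n$ vertices, which leaves room to avoid previously used vertices.

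\textbf{Main obstacle.} The hardest step is arranging the embedding order so that each new point really is constrained by at most $m$ lines whose other $q$ points are all already placed. We would order the points by the number of nonzero coordinates in a fixed basis. We would also handle lines with several unplaced points by embedding all $q$ remaining points of such a line together as a link $(q-1)$-tuple inside a codegree neighbourhood. Iterating codegree conditions gives this, because the link of a $(q-1)$-set is dense in the required sense.

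\textbf{Lower bounds.} We partition $n$ vertices into $m$ parts and 2-colour or $\mathbb{Z}_m$-label them. A $(q+1)$-set is an edge according to the sum of its part labels: for $m=2$ and $q$ odd we use parity, and for $q=2$ we use suitable modular conditions. Every $q$-set then extends in about $(1-\frac1m)n$ ways. To show that $\PG_m(q)$ is not contained in the construction, we take a hypothetical copy and obtain a labelling $f$ of the points of $\PG_m(q)$ in which the label sum of every line avoids one forbidden residue. Summing over all lines through a point, or over all lines, and using the incidence counts $q+1$ and $\frac{q^{m+1}-1}{q-1}$, yields a contradiction modulo $m$ or modulo $2$.

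This counting is exactly what restricts the equality cases to the listed $(m,q)$. For $m=3$ and $q=3$ we expect to need a case check on the labellings rather than a single count.
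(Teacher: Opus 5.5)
\textbf{There is a genuine gap: the upper-bound argument fails at its central step.} You assume the points of $\PG_m(q)$ can be ordered so that each new point lies in at most $m$ codegree neighbourhoods. No ordering does this. In a greedy embedding, a line constrains only its last-embedded point. The last point of the whole ordering completes all $\frac{q^m-1}{q-1}=1+q+\cdots+q^{m-1}>m$ lines through it, so it must lie in the common neighbourhood of more than $m$ distinct $q$-sets. Codegree $(1-\frac1m+\varepsilon)n$ does not guarantee such a vertex: already for the Fano plane, three sets of density just above $\frac12$ can have empty common intersection. Grouping points into link $(q-1)$-tuples or passing to blow-ups does not change this count, and $1-\frac1m$ does not come from intersecting $m$ neighbourhoods. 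What is missing is the recursive structure behind Keevash and Zhao's bound, which the paper recalls in its concluding remark. First, $\PG_m(q)\subseteq\PG_{m-1}(q)^+$. To see this, take a point $y$ off a hyperplane $\Pi$. For $x\in\Pi$, the $q$ points of the line $yx$ other than $y$ serve as the copies $x^1,\dots,x^q$ in the blow-up $\PG_{m-1}(q)(q)$. Projection from $y$ maps each line not through $y$ bijectively onto a line of $\Pi$. Second, one needs their lemma $\gamma(F^+)\le\frac{1}{2-\gamma(F)}$. Iterating $x\mapsto\frac{1}{2-x}$ from $\gamma(\PG_1(q))=0$ (a single edge) gives $\frac12,\frac23,\dots,1-\frac1m$.

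\textbf{The lower-bound counting only settles $m=2$ with $q$ odd}, which is exactly the paper's proof of Theorem~\ref{th1} with $d=2$. Consider a rule of the form ``the label sum of an edge avoids $r$ modulo $m$''. Some part $V_c$ then spans a complete $(q+1)$-graph whenever $(q+1)c\not\equiv r$, and such a $c$ exists unless $m\mid q+1$ and $r=0$. So this rule cannot work for $(m,q)=(2,2)$ or $(3,3)$. For the Fano plane you instead need the non-monochromatic-triples construction together with the fact that the Fano plane is not $2$-colourable. When $m\mid q+1$ and $m\ge 3$, line sums can take $m-1\ge 2$ values, so summing over all lines or over the lines through a point gives no contradiction. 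You need a second-moment count over pairs of points, which is the device of the paper's Claim~1. For example, take $\PG_3(2)$ with $\Z_3$-labels $f$, and let the edges be the triples of nonzero label sum. Every line sum satisfies $s(\ell)^2\equiv 1$, so $\sum_\ell s(\ell)^2\equiv 35\equiv 2\pmod 3$. Counting ordered pairs on lines gives $\sum_\ell s(\ell)^2=6\sum_u f(u)^2+\bigl(\sum_u f(u)\bigr)^2$, which is a square modulo $3$ and so never $2$. For $(m,q)=(3,3)$ your plan supplies no valid construction at all.
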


They also proposed the following open problem.
\begin{prob}(Keevash and Zhao \cite{keevash})\label{prob1}
	Is $\gamma(\PG_m(q)) > 0$ for all $m$ and $q$?
\end{prob}

Later, Zhang and Ge \cite{zhang} proved that $\gamma(PG_2(q)) = \frac{1}{2}$ for prime power $q \equiv 2 \pmod 3$, and $\gamma(PG_3(q)) = \frac{2}{3}$ for prime power $q \equiv 1 \pmod 2$ or $q \equiv 2 \pmod 3$.
\vskip 0.5em

In this paper, we completely resolve Problem \ref{prob1} and answer it in the affirmative.

\begin{theo}\label{th1}
	Let $q$ be a prime power and $p$ be the smallest prime divisor of $q+1$.  Then
	\[
	\gamma(\PG_m(q)) \ge \frac{1}{p} >0.
	\]
\end{theo}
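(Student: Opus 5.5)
Since $\PG_m(q)$ is a $k$-graph with $k=q+1$ and $p\mid k$, the natural candidate is a modular coloring construction. Partition $[n]$ into $p$ parts $V_0,\dots,V_{p-1}$ of nearly equal size, or equivalently take a random map $\phi:V\to\Z_p$. Let $H$ be the $k$-graph whose edges are the $k$-sets $e$ with $\sum_{v\in e}\phi(v)\not\equiv 0 \pmod p$. Any $(k-1)$-set $S$ has some residue $s=\sum_{v\in S}\phi(v)$. A vertex $v\notin S$ completes $S$ to an edge exactly when $\phi(v)\not\equiv -s$. This holds for all vertices outside one class, so the codegree is at least $(1-\frac1p)n-O(1)\ge n/p$ when $p\ge2$. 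That is more than needed, so the real question is whether $H$ is $\PG_m(q)$-free.

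Freeness would require every embedding $f$ of $\PG_m(q)$ to have some line whose colour sum is $0$. Embeddings are injective, so this amounts to showing that no colouring $c:\PG_m(q)\to\Z_p$ makes every line sum nonzero. That statement is false in general. For instance, $c\equiv 1$ gives each line sum $q+1\equiv0$, which is fine, but other colourings may avoid zero everywhere. So I would reverse the roles and ask for edges with \emph{zero} sum. Put $e\in H$ iff $\sum_{v\in e}\phi(v)\equiv 0\pmod p$. The codegree is then $|V_{-s}|\approx n/p$, matching the bound $\frac1p$. Freeness now requires that every colouring $c:\PG_m(q)\to\Z_p$ has a line with nonzero sum.

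The key counting step is to suppose all line sums are $0$ and derive a contradiction. Fix a point $x$. The $\frac{q^m-1}{q-1}$ lines through $x$ partition the remaining points. Summing over them gives $\frac{q^m-1}{q-1}c(x)+\sum_{y\ne x}c(y)\equiv0$, so $\sum_{\text{all }y}c(y)\equiv\bigl(1-\frac{q^m-1}{q-1}\bigr)c(x)=-q\frac{q^{m-1}-1}{q-1}c(x)$. Since $q\equiv-1\pmod p$, we have $\frac{q^{m-1}-1}{q-1}=1+q+\dots+q^{m-2}$, which is congruent mod $p$ to $0$ or $1$ depending on the parity of $m-1$. This only shows that $c(x)$ is constant when that factor is a unit, and the constant colouring has all line sums $\equiv (q+1)c\equiv0$. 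So a constant colouring is a genuine counterexample, and the plain zero-sum construction fails.

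The fix is to break the symmetry. Let the constant colouring be forbidden by requiring, say, that every edge meets at least two classes, or use weights with $\phi$ valued in $\Z_p$ while forbidding monochromatic edges. Consider the edges $e$ with $\sum\phi\equiv0$ and $e$ not monochromatic. This costs only $O(1)$ codegree for sets $S$ that are monochromatic in class $a$, and for such $S$ the allowed completions lie in a single class anyway. For $p=2$ the difficulty becomes whether a constant class can still be avoided. The main obstacle is therefore the freeness proof: showing that any $\Z_p$-colouring of $\PG_m(q)$ with all line sums zero and no monochromatic line is impossible. I would attack this with the point-counting identity above restricted to planes $\PG_2(q)\subseteq\PG_m(q)$. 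There $\frac{q^2-1}{q-1}=q+1\equiv0$ forces a relation among the colours, and I would induct on subplanes to force constancy on a line, giving the contradiction.
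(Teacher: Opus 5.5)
\textbf{The final construction has minimum codegree $0$.} The freeness argument is unfinished, but the construction already fails before that. Let $S$ be a $q$-set lying inside a single class $V_a$. Its colour sum is $qa\equiv -a\pmod p$, because $q\equiv -1$. So the only vertices $v$ with $\sum_{u\in S\cup\{v\}}\phi(u)\equiv 0$ are those with $\phi(v)=a$, and every such $S\cup\{v\}$ is monochromatic, hence deleted. Thus $d_H(S)=0$ and $\delta_{\co}(H)=0$.

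The ``single class'' of completions you mention is exactly the excluded one, so the loss is total rather than $O(1)$. Forbidding monochromatic edges in the zero-sum hypergraph removes precisely the edges that monochromatic $(k-1)$-sets rely on, so this route cannot be patched as stated. Your freeness target is in fact true in $\PG_2(q)$: over $\F_p$ the point--line incidence matrix $N$ satisfies $N^{T}N\equiv J-I$, which has rank $q^2+q$, so the only zero-sum colourings are constant. That is of no use once the codegree is $0$.

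\textbf{The missing idea is to target a fixed \emph{nonzero} residue, as the paper does.} Declare $e$ an edge iff $\sum_{v\in e}\phi(v)\equiv 1\pmod p$. A $q$-set $S$ with colour sum $s$ is completed by all of $V_{1-s}\setminus S$. This gives $\delta_{\co}\ge\lfloor n/p\rfloor-q$ with no exceptional sets, and constant colourings stop being an obstruction.

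Freeness is then a single double count inside a plane, which suffices since $\PG_2(q)\subseteq\PG_m(q)$. Summing the line sums over all $q^2+q+1$ lines gives $q^2+q+1\equiv 1\pmod p$. Each point lies on $q+1\equiv 0$ lines, so the same total equals $(q+1)\sum_u\phi(u)\equiv 0$, a contradiction.

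For $p=2$, your very first construction (odd colour sum) is exactly this hypergraph. You discarded it on the strength of an unsupported claim that colourings avoiding zero line sums exist. This double count shows it is in fact $\PG_2(q)$-free.
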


\begin{remark}
If $q$ is an odd prime power, we have $p=2$. Consequently, the lower bound $\gamma(PG_2(q)) \geq \frac{1}{2}$ established in Theorem \ref{th1} is sharp.
\end{remark}

    Since $4\equiv -1\pmod 5$, we have $2^{4k+2}+1 =4^{2k+1}+1 \equiv (-1)^{2k+1}+1\equiv 0\pmod 5.$ Thus $5\mid 2^{4k+2}+1$. By Theorem \ref{th1}, we have the following corollary.

  \begin{cor}
  Let $k\geq 0$ be an integer. Then
      $$\gamma(\PG_m(2^{4k+2})) \ge \frac{1}{5}.$$
  \end{cor}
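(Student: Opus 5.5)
The corollary is a direct specialization of Theorem \ref{th1}, so the plan is simply to check its hypotheses for $q=2^{4k+2}$ and read off the bound.

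First, $q=2^{4k+2}$ is a prime power for every integer $k\ge 0$, so Theorem \ref{th1} applies. It gives $\gamma(\PG_m(q))\ge 1/p$, where $p$ is the smallest prime divisor of $q+1$.

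Second, we need $p\le 5$. By the computation preceding the corollary, $q+1=4^{2k+1}+1\equiv(-1)^{2k+1}+1\equiv 0\pmod 5$, so $5\mid q+1$. Since $p$ is the smallest prime divisor of $q+1$, this gives $p\le 5$, hence $1/p\ge 1/5$.

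Combining the two steps yields $\gamma(\PG_m(2^{4k+2}))\ge 1/p\ge 1/5$, which is the claim.

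There is no real obstacle here; all the depth lies in Theorem \ref{th1}. One could sharpen the bound by locating $p$ exactly. Since $q+1$ is odd, $p\ne 2$. Since $q\equiv 1\pmod 3$, we have $q+1\equiv 2\pmod 3$, so $p\ne 3$. Therefore $p=5$ exactly, although the inequality $p\le 5$ is already enough for the statement.
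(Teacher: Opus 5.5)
Your proposal is correct and follows the paper's argument: apply Theorem~\ref{th1} with $q=2^{4k+2}$, use $4\equiv -1 \pmod 5$ to get $5\mid q+1$, and conclude $1/p\ge 1/5$. Your extra observation that $q+1$ is odd and $q+1\equiv 2\pmod 3$, so that $p=5$ exactly, is also correct.
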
  

Moreover, for $m=4$ and an odd prime power $q$, we determined the exact codegree Tur\'an density of $\PG_4(q)$.
\begin{theo}\label{th2}
	Let $q$ be an odd prime power, then
	\[
	\gamma(\PG_4(q))= \frac34.
	\]
\end{theo}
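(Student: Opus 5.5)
The upper bound is already available: by Theorem~\ref{thm0} with $m=4$ we have $\gamma(\PG_4(q))\le 1-\frac14=\frac34$ for every $q$. So the work is the matching lower bound $\gamma(\PG_4(q))\ge \frac34$ for odd $q$. I will get it from a "group-labelling" construction of the type used for Theorem~\ref{th1}, but with a group of order $4$ in place of $\mathbb{Z}_p$.

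\textbf{Construction and codegree.} Let $G$ be an abelian group of order $4$; I will try $G=\mathbb{Z}_4$ first and fall back to $\mathbb{Z}_2^2$ if needed. Partition $[n]$ into four nearly equal classes $V_g$, $g\in G$, and write $c(v)=g$ for $v\in V_g$. Let $H_n$ be the $(q+1)$-graph whose edges are the $(q+1)$-sets $e$ with $\sum_{v\in e}c(v)\neq 0$.

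Fix any $q$-set $S$. A vertex $v\notin S$ fails to extend $S$ to an edge exactly when $c(v)=-\sum_{u\in S}c(u)$, so only one colour class is excluded. Hence $\delta_{\co}(H_n)\ge \frac34 n-O(1)$, which gives the density $\frac34$.

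\textbf{Reduction to a colouring statement.} A copy of $\PG_4(q)$ in $H_n$ induces a map $\varphi$ from the points of $\PG_4(q)$ to $G$, namely $\varphi(x)=c(\text{image of }x)$. Every line $L$ then has nonzero sum $\sigma(L)=\sum_{x\in L}\varphi(x)$. So it suffices to prove the following.

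\emph{Claim.} For odd $q$, every map $\varphi$ from the points of $\PG_4(q)$ to $G$ has a line with $\sigma(L)=0$.

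\textbf{Attack on the Claim: arithmetic identities.} I first collect the arithmetic available for odd $q$.
\begin{itemize}
\item Every line has $q+1\equiv 0 \pmod 2$ points.
\item The number of lines through a point is $(q+1)(q^2+1)\equiv 0\pmod 4$.
\item Inside a hyperplane $\PG_3(q)$, the number of lines through a point is $q^2+q+1$, which is odd.
\item Inside a plane, the number of lines through a point is $q+1$.
\end{itemize}
Double counting the sums $\sigma(L)$ over all lines through a point, over all lines of a plane, and over all lines of a solid then gives linear relations among the sums of $\varphi$ over subspaces. For example, summing over all lines through $x$ gives
\[
\sum_{L\ni x}\sigma(L)=(q+1)(q^2+1)\varphi(x)+\sum_{y\neq x}\varphi(y)=S-\varphi(x),
\]
where $S=\sum_y\varphi(y)$. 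The analogous identities in planes and solids involve the odd and even counts above.

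\textbf{Attack on the Claim: dimension induction.} The idea is to combine these identities with the planar and solid cases. Parity arguments in the style of Keevash--Zhao show that for odd $q$ no plane admits a $\mathbb{Z}_2$-labelling with all line sums odd. Projecting $\varphi$ to $G/2G$ (or to a quotient $\mathbb{Z}_2$ of $\mathbb{Z}_2^2$) yields planes on which the reduced labelling must have a line with even sum. Lifting back to $G$, I would then show that the line sums of such lines lie in the order-$2$ subgroup. Finally, averaging over the $(q^2+1)$-many planes through a line inside a solid, combined with the identities above, should force one of these sums to be exactly $0$.

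\textbf{Fallback: character sums.} If this combinatorial bookkeeping fails, I would write the number of zero-sum lines as
\[
\frac14\sum_{\chi\in\widehat G}\ \sum_{L}\ \prod_{x\in L}\chi(\varphi(x)).
\]
The aim is then to show that the nontrivial characters cannot cancel the trivial-character term. This would use the incidence structure (point–line duality in $\PG_4(q)$) and the fact that the characters take values in $\{\pm1,\pm i\}$.

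\textbf{Main obstacle.} I expect the Claim itself, i.e.\ the $\PG_4(q)$-freeness of $H_n$, to be the hard step, and in particular the choice between $\mathbb{Z}_4$ and $\mathbb{Z}_2^2$. The mod-$2$ information from odd $q$ is easy to extract. Upgrading it to a genuine zero in a group of order $4$ is where the full $4$-dimensionality must be used, presumably through the divisibility $4\mid (q+1)(q^2+1)$, which fails in dimension $3$.
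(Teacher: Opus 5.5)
\textbf{There is a genuine gap: the $\PG_4(q)$-freeness of your construction, which is the whole lower bound, is not proved, and $\mathbb{Z}_4$ does not work at all.}

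Your upper bound and codegree count are fine. With $G=\mathbb{Z}_2^2$, your hypergraph is exactly the paper's: $|e|=q+1$ is even and the four elements of $\mathbb{Z}_2^2$ sum to $0$, so $\sum_{v\in e}c(v)\neq 0$ holds precisely when exactly two of the $|e\cap V_g|$ are odd. The group $\mathbb{Z}_4$, however, is not an option, and your Claim is false for it.
\begin{itemize}
\item If $q\equiv 1\pmod 4$, every $(q+1)$-subset of $V_1$ has sum $q+1\equiv 2\neq 0$. So $V_1$ spans a complete $(q+1)$-graph, which contains $\PG_4(q)$.
\item Even for $q=3$, $\PG_4(3)$ has a labelling with no zero-sum line. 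Label the plane $x_0=x_1=0$ by the indicator of a projective triangle, which meets every line of that plane in $1$, $2$ or $3$ points. Label the points with $x_0=0\neq x_1$ by $2$. Label $(1,x_1,\dots,x_4)$ by $2$ if $x_1=0$ and by $3$ otherwise. The four kinds of lines then have sums in $\{1,2,3\}$, $2+\{0,1\}$, $-h+\{0,1\}$ with $h\in\{2,3\}$, and $2+3+3+2$. All of these are nonzero mod $4$.
\end{itemize}

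The routes you sketch do not reach the freeness statement.
\begin{itemize}
\item The identities you list, such as $\sum_{L\ni x}\sigma(L)=S-\varphi(x)$, are linear in $\varphi$ and hold for every labelling. By themselves they say nothing about the condition $\sigma(L)\neq 0$.
\item Your extra input, a line in each plane whose sum lies in an order-$2$ subgroup, is local. No argument confined to a single solid can produce a zero-sum line, because $\PG_3(q)$ does embed in $H_n$. To see this, label the layers $\Pi_i\setminus\Pi_{i-1}$ of a flag $\Pi_0\subset\Pi_1\subset\Pi_2\subset\Pi_3$ by the four distinct elements of $\mathbb{Z}_2^2$.
\item The character-sum fallback is only a restatement of the problem.
\item Minor point: a line of a solid lies in $q+1$ of its planes, not $q^2+1$.
\end{itemize}

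The missing idea is the paper's Claim~1, a \emph{quadratic} count modulo $4$. Take a copy of $\PG_3(q)$ with colour classes $C_i$ of sizes $c_i$. Then $\sum_L|L\cap C_i|^2=c_i(c_i+q^2+q)$. Each line has exactly two odd part-counts, and there are $(q^2+1)(q^2+q+1)\equiv 2\pmod 4$ lines. Hence $\sum_i c_i(c_i+q^2+q)\equiv 0\pmod 4$, and all $c_i$ have the same parity. In your language, every solid has $G$-sum $0$.

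Only then does a double count of your type close the argument, but over solids through $x$ rather than lines through $x$. There are $q^3+q^2+q+1$ solids through $x$, an even number, and every other point lies in $q^2+q+1$ of them, an odd number. This gives $0=S+c(x)$ for every point $x$, so all points of the copy lie in one class. That is impossible, since a line inside $V_g$ has sum $(q+1)g=0$.
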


\section{Proofs of the main results}

In this section, we give the proofs of the main results.
The following lemma presents the basic properties of $\PG_m(q)$.

\begin{lem}\label{lem}(\cite{hirs})
	The hypergraph $\PG_m(q)$ has the following properties:
	\vskip 0.5em
	(i) $|E(\PG_m(q))|=\frac{(q^{m+1}-1)(q^m-1)}{(q^2-1)(q-1)}; $
	\vskip 0.3em
    
	(ii) each fixed vertex is contained in exactly $\frac{q^m-1}{q-1}$ edges;
	\vskip 0.3em
	
	(iii) any two different vertices  belong to exactly one hyperedge.
\end{lem}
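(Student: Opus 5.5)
The three statements are standard counting facts about subspaces of $V=\mathbb{F}_q^{m+1}$, and I would prove each by linear algebra and double counting rather than by relying on \cite{hirs}. The tool throughout is the number of nonzero vectors in an $r$-dimensional space, $q^r-1$, together with the fact that each one-dimensional subspace contains exactly $q-1$ nonzero vectors. So the number of vertices of $\PG_m(q)$ is $(q^{m+1}-1)/(q-1)$. Likewise, each edge, being the set of one-dimensional subspaces of a two-dimensional space, has $(q^2-1)/(q-1)=q+1$ vertices, which confirms that $\PG_m(q)$ is a $(q+1)$-graph.

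For (i), I count ordered pairs $(u,w)$ of linearly independent vectors of $V$ in two ways.
\begin{itemize}
\item Directly: $u$ can be any of the $q^{m+1}-1$ nonzero vectors, and $w$ any vector outside the line $\langle u\rangle$, giving $q^{m+1}-q$ choices. So there are $(q^{m+1}-1)(q^{m+1}-q)$ such pairs.
\item Via planes: each pair spans a unique two-dimensional subspace $W$. Each such $W$ arises from exactly $(q^2-1)(q^2-q)$ pairs, namely its ordered bases.
\end{itemize}
Dividing gives
\[
\frac{(q^{m+1}-1)(q^{m+1}-q)}{(q^2-1)(q^2-q)}=\frac{(q^{m+1}-1)(q^m-1)}{(q^2-1)(q-1)}
\]
after cancelling a factor of $q$.

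For (ii), fix a vertex $\langle v\rangle$. The two-dimensional subspaces containing $v$ are in bijection with the one-dimensional subspaces of the quotient $V/\langle v\rangle\cong\mathbb{F}_q^{m}$, via $W\mapsto W/\langle v\rangle$. Hence there are exactly $(q^m-1)/(q-1)$ of them. Alternatively, one can count vectors $w\notin\langle v\rangle$, of which there are $q^{m+1}-q$, and divide by the $q^2-q$ such vectors lying in each plane through $v$.

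For (iii), take two distinct vertices $\langle u\rangle\neq\langle w\rangle$. Then $u$ and $w$ are linearly independent, so $\langle u,w\rangle$ is a two-dimensional subspace containing both, and it gives an edge through the two vertices. Conversely, any two-dimensional subspace containing $u$ and $w$ must contain their span, which already has dimension $2$, so it equals $\langle u,w\rangle$; hence the edge is unique.

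There is no real obstacle here. The only step needing care is the cancellation in (i), which must be checked to yield the stated closed form.
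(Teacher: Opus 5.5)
Your proof is correct. The paper does not prove this lemma; it quotes it from Hirschfeld's book \cite{hirs}. What you supply is the self-contained argument that the citation stands in for.

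Your double count in (i) is exactly the derivation of the Gaussian binomial $\binomq{m+1}{2}$, and the cancellation you flag does give the stated closed form. Your quotient-space bijection in (ii) is the same device the paper uses later, in the proof of Theorem~\ref{th2}, where it counts solids through a point or a line via $\F_q^5/X$ and $\F_q^5/U$. So your argument fits the rest of the paper and removes its dependence on the external reference, at the cost of a few lines.

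One point you use tacitly and should state explicitly: distinct two-dimensional subspaces give distinct edges. This holds because a subspace is the union of its one-dimensional subspaces, so $W$ is recovered from its vertex set. Likewise, a vertex $\langle v\rangle$ lies in the edge of $W$ if and only if $v\in W$. These two facts justify identifying edges with planes in (i) and (ii).
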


\noindent\textbf{Proof of Theorem \ref{th1}}.
Since $\PG_a(q) \subseteq \PG_b(q)$ for $a \le b$, we always have $\gamma(\PG_a(q)) \le \gamma(\PG_b(q))$. Therefore, to show $\gamma(\PG_m(q))>0$, it suffices to show that $\gamma(\PG_2(q))>0$. 

Let $d>1$ be an integer such that
$d\mid q+1$.  We shall construct, for every sufficiently large $n$, a $\PG_2(q)$-free
$(q+1)$-graph $H_n$ on $n$ vertices with
\[
\delta_{\co}(H_n)\ge \left\lfloor\frac{n}{d}\right\rfloor-q.
\]
This immediately implies $\gamma(\PG_2(q))\ge 1/d$.

Partition the vertex set of $H_n$ into $d$ almost equal parts, i.e., 
$$V(H_n )= V_0\cup V_1\cup\cdots \cup V_{d-1}$$
with $|V_i|=\lfloor n/d\rfloor$ or $\lceil n/d\rceil$.
For a vertex $u$, we write
$\chi(u)=s \in \Z_d$
if and only if
$ u\in V_s$. And $\{u_1,\dots,u_{q+1}\}$ is an edge of $H_n$ if and only if
\[
\chi(u_1)+\cdots+\chi(u_{q+1})\equiv 1\pmod m.
\]

For every $q$-set $S=\{u_1,\dots,u_{q}\}$ in $H_n$, a vertex $y\in N(S)$ if and only if $\chi(y)=j$, where
\(
j\equiv 1-\sum_{i=1}^{q}\chi(u_i)\pmod d.
\)
Thus $ V_j \setminus S \subseteq N(S)$.  Therefore,
$
	\gamma(H_n)
	\ge
	\left\lfloor\frac nd\right\rfloor-q.
$

	Suppose, for a contradiction, that $H_n$ contains a copy of $\PG_2(q)$.  Then for any edge $e\in E(\PG_2(q))$, we have $	\sum_{u\in e}\chi(u)\equiv 1\pmod d$. Since there are $q^2+q+1$ edges in $\PG_2(q)$ by Lemma \ref{lem}, we have
	\begin{equation}\label{eq1}
			\sum_{e\in E(\PG_2(q))}\sum_{u\in e} \chi(u)\equiv  q^2+q+1 \pmod d.
	\end{equation}

 Interchanging the order of summation of the left-hand sides of (\ref{eq1}) gives
	\[
		\sum_{e\in E(\PG_2(q))}\sum_{u\in e} \chi(u)
	=\sum_u \chi(u) \cdot |\{e:u\in e\}|=\sum_u \chi(u)(q+1)\equiv 0\pmod d,
	\]
	since $d\mid(q+1)$. On the other hand, we have $q\equiv -1\pmod d$, and then
$$q^2+q+1\equiv (-1)^2+(-1)+1\equiv 1\pmod d,$$  a contradiction. Therefore, $H_n$ does not contain a copy of $\PG_2(q)$.

Take $d=p$, the conclusion follows.
$\hfill \blacksquare$

\vskip 2mm

\noindent\textbf{Proof of Theorem \ref{th2}}.
By Theorem \ref{thm0}, it suffices to show that $\gamma(\PG_4(q))\ge \frac34$. 
Let $H_n$ be a $(q+1)$-graph on $n$ vertices with $V(H_n )= V_1\cup V_2\cup V_3\cup V_{4}$, where $|V_i|=\lfloor n/4\rfloor$ or $\lceil n/4\rceil$. A $(q+1)$-set $e$ is an edge of $H_n$ if and only if exactly two of the four integers
$|e\cap V_1|$, $|e\cap V_2|$, $|e\cap V_3|$, $|e\cap V_4|$
are odd.
We will prove that 
$
\delta_{\co}(H_n)\ge \frac{3n}{4}-q-1
$ and $H_n$ is $\PG_4(q)$-free.
And then the result follows.

Firstly, we prove that $
\delta_{\co}(H_n)\ge \frac{3n}{4}-q-1$.
Let $S \subseteq V(H_n)$ be an arbitrary $q$-element vertex set. 
For each $i \in [4]$, we denote the part-counts of $S$ by $s_i = |S \cap V_i|$. 
Since $\sum_{i=1}^4 s_i = |S| = q$ is odd, exactly $1$ or $3$ of the integers $s_1, s_2, s_3, s_4$ are odd. 
We proceed by analyzing these two cases.

If exactly one of $s_1,s_2,s_3,s_4$ is odd,
let $j$ be the unique index such that $s_j$ is odd. If $v\in V_j\setminus S$, then $s_j$ changes from odd to even, while all other part-counts were already even. Hence $S\cup\{v\}$ has zero odd part-counts, so $S\cup\{v\}$ is not an edge of $H_n$. If $v\in V_i\setminus S$ with $i\ne j$, then one of the originally even part-counts becomes odd while $s_j$ remains odd. Hence $S\cup\{v\}$ has exactly two odd part-counts, so $S\cup\{v\}$ is an edge of $H_n$. Thus
\(
d_{H_n}(S)
=|V\setminus S|-|V_j\setminus S|
\ge n-q-|V_j| \ge \frac{3n}{4}-q-1.
\)

If exactly three of $s_1,s_2,s_3,s_4$ are odd,
let $j$ be the unique index such that $s_j$ is even. If $v\in V_j\setminus S$, then $S\cup\{v\}$ has four odd part-counts, so it is not an edge of $H_n$.
If $v\in V_i\setminus S$ with $i\ne j$, then one of the three originally odd part-counts becomes even, and the other two odd part-counts remain odd. Hence $S\cup\{v\}$ has exactly two odd part-counts, so it is an edge of $H_n$.
Thus 
\(d_{H_n}(S)
\ge n-q-\left\lceil \frac n4\right\rceil
\ge \frac{3n}{4}-q-1.
\)

Therefore, $\delta_{\co}(H_n)\ge \frac{3n}{4}-q-1$, as desired.

\vskip 0.3em
Now we show that $H_n$ is $\PG_4(q)$-free. Assume, to the contrary, that $H_n$ contains a copy of $\PG_4(q)$. 
Then $H_n$ contains a $\PG_3(q)$. We have the following claim.

\vskip 0.5em
\noindent\textbf{Claim 1.} \label{c1} If $\PG_3(q) \subseteq H_n$, then either each
\(
|C_i|
\)
is even, or each
\(
|C_i|
\) is odd, where $C_i= V_i \cap V(\PG_3(q))$ for $i\in [4]$. 
\begin{proof}
	Let $M$ be the number of edges of $\PG_3(q)$. Then by Lemma \ref{lem}, 
	$$M=\frac{(q^4-1)(q^3-1)}{(q^2-1)(q-1)}=(q^2+1)(q^2+q+1).
	$$
	Since $q$ is odd, $q^2\equiv 1\pmod 4$. Thus $
	q^2+1\equiv 2\pmod 4$ and $q^2+q+1 \equiv 
    1$ or $3 \pmod 4$. Therefore, $M\equiv 2\pmod 4.$
	In particular, $2M\equiv 0\pmod 4.$
	
	For $i\in [4]$, let $c_i=|C_i|$ and $\alpha_i(e)=|e\cap C_i|$ for any edge $e\in E(\PG_3(q))$.
	Let
	\[
	O_i=|\{e\in E(\PG_3(q)):\alpha_i(e)\text{ is odd}\}|.
	\]
	We claim that
	$O_i\equiv c_i(c_i+q^2+q)\pmod 4.$
	To prove this, consider
$\sum_{e\in E(\PG_3(q))} \alpha_i(e)^2.$
	For a fixed edge $e$, $\alpha_i(e)^2$ counts ordered pairs $(x,y)$ such that $x,y\in e\cap C_i.$
	Therefore, the whole sum counts triples $(e,x,y)$, where $e\in E(\PG_3(q))$ and $x,y\in e\cap C_i.$ 
	
	We count these triples in two parts. 
    If $x = y$, then there are $c_i$ ways to choose the vertex $x$. 
	By Lemma \ref{lem}, the number of edges $e$ containing $x$ in $PG_3(q)$ is exactly $q^2+q+1$. 
	Hence, the triples satisfying $x = y$ contribute $(q^2+q+1)c_i.$	
	If $x\ne y$, then there are $c_i(c_i-1)$ ordered pairs $(x,y)$ of distinct vertices of $C_i$. Since any two distinct projective points determine a unique projective line, these triples contribute $
	c_i(c_i-1).$
	Therefore
	\[
	\sum_{e\in E(\PG_3(q))} \alpha_i(e)^2
	=(q^2+q+1)c_i+c_i(c_i-1)
	=c_i(c_i+q^2+q).
	\]

	On the other hand, if $\alpha_i(e)$ is even, then $\alpha_i(e)^2\equiv 0\pmod 4$. If $\alpha_i(e)$ is odd, then $\alpha_i(e)^2\equiv 1\pmod 4$. Hence $
	\sum_{e\in E(\PG_3(q))} \alpha_i(e)^2\equiv O_i\pmod 4.$
	So $
	O_i\equiv c_i(c_i+q^2+q)\pmod 4.$
	And $
	O_1+O_2+O_3+O_4
	\equiv
	\sum_{i=1}^4 c_i(c_i+q^2+q)
	\pmod 4.$ Since every edge contributes exactly $2$ to $O_1+O_2+O_3+O_4$, we have $
	O_1+O_2+O_3+O_4=2M.$
	Because $2M\equiv 0\pmod 4$, we get 
	\begin{equation}\label{eq2}
		\sum_{i=1}^4 c_i(c_i+q^2+q)\equiv 0\pmod 4.
	\end{equation}
	
	Let $P=|\{i\in[4]:c_i\text{ is odd}\}|.$ We will show that $P$ is either $0$ or $4$. 
    
    If $q\equiv 1\pmod 4$, then $
	q^2+q\equiv 1+1\equiv 2\pmod 4.$
	If $c_i$ is even, then $c_i\equiv 0$ or $2\pmod 4$, and $c_i(c_i+q^2+q)\equiv 0\pmod 4$. 
	If $c_i$ is odd, then $c_i\equiv 1$ or $3\pmod 4$, and $
	c_i(c_i+q^2+q)\equiv 3\pmod 4.$	
	Therefore, $
	\sum_{i=1}^4 c_i(c_i+q^2+q)\equiv 3P\pmod 4.$
	By (\ref{eq2}), we have $
	3P\equiv 0\pmod 4.$
	Because $3$ is invertible modulo $4$, this implies $
	P\equiv 0\pmod 4.$
	Since $P\in\{0,1,2,3,4\}$, we obtain $
	P\in\{0,4\}.$
	
	If $q\equiv 3\pmod 4$, then $
	q^2+q\equiv 1+3\equiv 0\pmod 4.$
	Therefore $c_i(c_i+q^2+q)\equiv c_i^2\pmod 4.$
	Note that $c_i^2\equiv 1 \pmod 4$ if $c_i$ is odd and $c_i^2\equiv 0 \pmod 4$ if $c_i$ is even. Thus, $\sum_{i=1}^4 c_i(c_i+q^2+q)\equiv P\pmod 4.$ By (\ref{eq2}), we have $
	P\equiv 0\pmod 4$, that is, $P\in\{0,4\}.$ 
	
	Combining the arguments above, we can see that the assertion holds.
\end{proof}

Recall that $H_n$ contains a copy of $\PG_4(q)$ and $V(H_n )= V_1\cup V_2\cup V_3\cup V_{4}$.
In the vector space $\F_2^4$, let
$
\vv_1=(1,0,0,0), \vv_2=(0,1,0,0), \vv_3=(0,0,1,0), \vv_4=(0,0,0,1)
$
be the standard basis vectors, and let $\0=(0,0,0,0)$ and
$
\one=(1,1,1,1).
$
For each vertex $u\in V(\PG_4(q))$, define $\chi(u)\in\F_2^4$ by
\[
\chi(u)=\vv_i\quad\text{if and only if}\quad u\in V_i.
\]
For a subgraph $F\subseteq \PG_4(q)$, define its parity vector by
\[
f(F)=\sum_{u\in V(F)}\chi(u)\in\F_2^4.
\]
The sum is taken in $\F_2^4$. 
For every projective solid $\varOmega\cong \PG_3(q)$ inside $\PG_4(q)$, we have
$
f(\varOmega)\in\{\0,\one\}
$ by Claim 1.

By the definition of $H_n$,
 there exists no $i\in[4]$ such that $V(\PG_4(q)) \subseteq V_i$.
Choose two vertices
$
x,y\in V(\PG_4(q))
$
with
$
\chi(x)=\vv_a,
\chi(y)=\vv_b,
$
where $a,b\in\{1,2,3,4\}$ and $a\ne b$.
Let $\mathcal S_x$ be the set of projective solids $\varOmega\cong \PG_3(q)$ in $\PG_4(q)$ such that $x\in\varOmega$. We compute
$
\sum_{\varOmega\in\mathcal S_x}f(\varOmega)
$
in two ways in the vector space $\F_2^4$.

On the one hand, since every $f(\varOmega)$ is either $\0$ or $\one$, we have
$
\sum_{\varOmega\in\mathcal S_x}f(\varOmega)\in\{\0,\one\}.
$
On the other hand,
\begin{equation}\label{eqf}
\sum_{\varOmega\in\mathcal S_x}f(\varOmega)
=
\sum_{\varOmega\in\mathcal S_x}\sum_{z\in V(\varOmega)}\chi(z)=
\sum_{z\in V(\PG_4(q))} N_x(z)\chi(z),
\end{equation}
where $N_x(z)$ denotes the number of projective solids $\PG_3(q)$ containing both $x$ and $z$.

We now compute $N_x(z)$ modulo $2$.
First consider $z=x$. Let $X\le \F_q^5$ be the one-dimensional vector subspace corresponding to $x$. A projective solid $\varOmega$ through $x$ corresponds to a four-dimensional vector subspace $W\le \F_q^5$ with $X\subseteq W$. Equivalently, it corresponds to a three-dimensional subspace of the quotient vector space $\F_q^5/X$, which has dimension $4$. The number of such subspaces is
$
\binomq{4}{3}=\binomq{4}{1}=\frac{q^4-1}{q-1}=q^3+q^2+q+1,
$
where $\binomq{n}{k}$ is the Gaussian binomial coefficients.
Since $q$ is odd, the four terms $q^3,q^2,q,1$ are all odd. Therefore,
$
N_x(x)\equiv 0\pmod 2.
$

Now take $z\ne x$. Let $U\le\F_q^5$ be the two-dimensional vector subspace corresponds to the unique projective line determined by $x$ and $z$. Then the projective solids containing both $x$ and $z$ correspond to four-dimensional vector subspaces $W\le\F_q^5$ with $U\subseteq W$. Equivalently, they correspond to two-dimensional subspaces of the quotient vector space $\F_q^5/U$, which has dimension $3$. Thus the number of such solids is
$
\binomq{3}{2}=\binomq{3}{1}=\frac{q^3-1}{q-1}=q^2+q+1.
$
Hence
$
N_x(z)\equiv 1\pmod 2.
$

Combining (\ref{eqf}) with the above arguments, we have
\[
\sum_{\varOmega\in\mathcal S_x}f(\varOmega)
=
\sum_{z\ne x}\chi(z)
\]
in $\F_2^4$.
Let
$
\textbf{u} =f(\PG_4(q))=\sum_{z\in V(\PG_4(q))}\chi(z).
$
Since addition and subtraction are the same in $\F_2^4$,
$
\sum_{z\ne x}\chi(z)=\textbf{u} -\chi(x)=\textbf{u} +\chi(x)=\textbf{u} +\vv_a.
$
By $\sum_{\varOmega\in\mathcal S_x}f(\varOmega)\in\{\0,\one\} $, we get
$
\textbf{u} +\vv_a\in\{\0,\one\}.
$
Therefore
$
 \textbf{u} \in\{\vv_a,\one+\vv_a\}.
$
The same argument applied to the vertex $y$ gives
$
\textbf{u}  \in\{\vv_b,\one+\vv_b\}.
$
But the two sets
$
\{\vv_a,\one+\vv_a \}$ and
$
\{\vv_b,\one+\vv_b\}
$
are disjoint because $a\ne b$.
It is a contradiction. 

Therefore, $H_n$ is $\PG_4(q)$-free.
$\hfill \blacksquare$

\section{Concluding Remark}

Given any $k$-graph $F$ and a positive integer $t$, the $t$-blowup $F(t)$ is defined as follows. For each vertex $x \in V(F)$, there correspond vertices $x^1, \dots, x^t$ in $V(F(t))$. For each edge $x_1 \cdots x_k \in E(F)$, the edge set of $F(t)$ contains all $t^k$ edges of the form $x_1^{i_1} \cdots x_k^{i_k}$ with $1 \le i_1, \dots, i_k \le t$. 
Let $F^+$ denote the $k$-graph obtained from $F(k-1)$ by adding a new vertex $y$ and all edges $y x^1 \cdots x^{k-1}$ for each $x \in V(F)$.

In the proof of the upper bound of $\gamma(\PG_m(q))$, Keevash and Zhao \cite{keevash} showed that 
$
\gamma(F^+) \le \frac{1}{2-\gamma(F)}
$
and $\PG_m(q) \subseteq \PG_{m-1}(q)^+$. 
Furthermore, Keevash \cite{keevash1} proved that there is a constant \(\varepsilon>0\) such that $\gamma(\PG_2(4)) < \frac{1}{2} - \varepsilon$. 
By combining these results, we can show that $\gamma(\PG_m(4)) < 1 - \frac{1}{m}$ holds for every $m \geq 2$. 
Indeed, using induction on $m$ with the base case $m=2$, the induction step yields
\[
\gamma(\PG_m(4)) \le \gamma(\PG_{m-1}(4)^+) \le \frac{1}{2-\gamma(\PG_{m-1}(4))} < \frac{1}{2 - (1 - \frac{1}{m-1})} = 1 - \frac{1}{m},
\]
where the strict inequality follows directly from the induction hypothesis. 

\vskip2mm
Let $k\geq 1$ be an integer and $p_k$ the smallest prime such that $p_k \mid 2^{4k}+1$. 
The known results for $\gamma(PG_m(q))$ are summarized in Table~\ref{tab}, where the entries highlighted in red represent our new contributions.
\vskip 2mm
\begin{table}[h]
	\centering
	\renewcommand{\arraystretch}{1.4}
	\begin{tabular}{|c|c|c|c|c|c|c|}
		\hline
		\diagbox{$m$}{$q$}
        & Odd $q$
		& $2$
		& $4$
		& $2^{2k+1}$
		& $2^{4k+2}$
		& $2^{4k}$
        \\
		\hline
		$2$
        & $\frac12$
		& $\frac12$
		& $[\frac13,\frac12)$
		& $\frac12$
		& $[{\color{red}  \frac15 },\frac12]$ 
		& $[ {\color{red} \frac{1}{p_k}},\frac12]$  \\
		\hline
		$3$
        & $\frac23$
		& $\frac23$
		& $[\frac12,\frac23 {\color{red} )}$	
		& $\frac23$
		& $[{\color{red} \frac15},\frac23]$
		& $[{\color{red} \frac{1}{p_k}},\frac23]$ \\
		\hline
		$4$
        & {\color{red} $\frac34$}
		& $[\frac23,\frac34]$
		& $[\frac12,\frac34{\color{red} )}$
		& $[\frac23,\frac34]$
		& $[{\color{red} \frac15},\frac34]$
		&  $[{\color{red} \frac{1}{p_k}},\frac34]$ \\
		\hline
		$m\ge5$
        &  $[ {\color{red}\frac34},\,1-\frac1m]$
		& $[\frac34,\,1-\frac1m]$
		& $[\frac12,\,1-\frac1m {\color{red} )}$
		& $[\frac23,\,1-\frac1m]$
		& $[{\color{red} \frac15},\,1-\frac1m]$ 
		& $[{\color{red} \frac{1}{p_k}},1-\frac1m]$ \\
		\hline
	\end{tabular}
	\caption{Known results of $\gamma(\PG_m(q))$.}
	\label{tab}
\end{table}

\vskip 5mm
\section*{Acknowledgments}
The authors thank Ruilin Zheng for helpful discussions. This research was supported by National Key R\&D Program of China under grant number 2024YFA1013900 and NSFC under grant number 12471327.

\vskip 5mm
\noindent\textbf{Data availability statement} \  Data sharing not applicable to this article as no datasets were generated or analyzed during the current study.

\vskip 5mm
\noindent\textbf{Declarations of conflict of interest}\  The authors declare that they have no known competing financial interests or personal relationships that could have appeared to influence the work reported in this paper.

\end{document}